\newcommand{\C}{\mathbb{C}}
\newcommand{\QQ}{\mathbb{Q}}
\newcommand{\NN}{\mathbb{N}}
\newcommand{\PP}{\mathbb{P}}
\newcommand{\OO}{\mathcal O}
\newcommand{\grif}{\hbox{Griff}}
\newcommand{\gr}{\hbox{Gr}}
\newcommand{\wt}{\widetilde}
\newcommand{\ima}{\hbox{Im}}
\newcommand{\rom}{\romannumeral}
\newtheorem{convention}{Conventions}
\newtheorem{nonumbering}{Theorem}
\newtheorem{nonumberingcon}{Conjecture}
\begin{document}

\title{On a multiplicative version of Bloch's conjecture}
%\thanks{Grants or other notes
%about the article that should go on the front page should be
%placed here. General acknowledgments should be placed at the end of the article.}

%\titlerunning{}        % if too long for running head

\author{Robert Laterveer %etc.
}

%\authorrunning{Short form of author list} % if too long for running head

\institute{CNRS - IRMA, Universit\'e de Strasbourg \at
              7 rue Ren\'e Descartes \\
              67084 Strasbourg cedex\\
              France\\
              \email{laterv@math.unistra.fr}           %  \\
%             \emph{Present address:} of F. Author  %  if needed
           }

\date{Received: date / Accepted: date}
%\date{}
% The correct dates will be entered by the editor

\maketitle

\begin{abstract} A theorem of Esnault, Srinivas and Viehweg asserts that if the Chow group of $0$--cycles of a smooth complete complex variety decomposes, then the top--degree coherent cohomology group decomposes similarly. In this note, we prove that (a weak version of) the converse holds for varieties of dimension at most $5$ that have finite--dimensional motive and satisfy the Lefschetz standard conjecture. The proof is based on Vial's construction of a refined Chow--K\"unneth decomposition for these varieties.
\end{abstract}

\keywords{Algebraic cycles \and Chow groups \and Intersection product \and Finite--dimensional motives \and Bloch--Beilinson conjectures}
% \PACS{PACS code1 \and PACS code2 \and more}
 \subclass{ 14C15 \and  14C25 \and  14C30}

\section{Introduction}

Let $X$ be a smooth complete variety of dimension $n$ defined over $\C$.
In the 1992 paper \cite{ESV}, Esnault, Srinivas and Viehweg study the multiplicative behaviour of the Chow ring $A^\ast X$ versus the multiplicative behaviour of the cohomology of $X$. We now state the part of their result that is relevant to us. For a given partition 
   $n=n_1+\cdots+n_r$
   (with $n_i\in\NN_{>0}$),
let us consider the following properties:

\vskip0.5cm
\noindent
(P1) There exists a Zariski open $V\subset X$, such that intersection product induces a surjection
  \[   A^{n_1}V_{\QQ}\otimes A^{n_2}V_{\QQ}\otimes\cdots\otimes A^{n_r}V_{\QQ}\ \to\ A^nV_{\QQ}\ ;\]

  \noindent
  (P2) There exists a Zariski open $V\subset X$, such that cup product induces a surjection
  \[ H^{n_1}(V,\QQ)\otimes H^{n_2}(V,\QQ)\otimes\cdots\otimes H^{n_r}(V,\QQ)\ \to\ H^n(V,\QQ)/N^1\ \]
  (here $N^\ast$ denotes the coniveau filtration);
  
  \noindent
  (P3) Cup product induces a surjection
    \[ H^{n_1}(X,\OO_X)\otimes H^{n_2}(X,\OO_X)\otimes\cdots\otimes H^{n_r}(X,\OO_X)\ \to\ H^n(X,\OO_X)\ .\]

\vskip0.5cm

In these terms, what Esnault, Srinivas and Viehweg prove is the following:

\begin{nonumbering}[Esnault--Srinivas--Viehweg \cite{ESV}] Let $X$ be a smooth complete variety of dimension $n$ over $\C$. Then (P1) implies (P3), and (P2) implies (P3).
\end{nonumbering}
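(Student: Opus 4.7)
The plan is to reduce both implications to statements about the lowest Hodge graded piece of the mixed Hodge structure on $H^n(V,\C)$ and to exploit its identification with $H^n(X,\OO_X)$. Three ingredients are central. (a) After resolution of singularities applied to the boundary, one may assume $D:=X\setminus V$ is a simple normal crossings divisor in $X$; Deligne's Hodge theory of the log de Rham complex $\Omega^\bullet_X(\log D)$ then yields $\gr^0_F H^m(V,\C)\cong H^m(X,\OO_X)$ for every $m$ (the blow-ups performed do not alter $H^*(X,\OO_X)$, as $R\pi_*\OO$ is trivial for smooth blow-ups). (b) The coniveau filtration is contained in the Hodge filtration: $N^pH^m(V,\C)\subset F^pH^m(V,\C)$. (c) Cup product is strictly compatible with the Hodge filtration: $F^p\cdot F^q\subset F^{p+q}$.

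For (P2)$\Rightarrow$(P3), I would tensor the hypothesis with $\C$ and post-compose with the surjection
\[ H^n(V,\C)/N^1\;\twoheadrightarrow\;H^n(V,\C)/F^1\;\cong\;H^n(X,\OO_X)\]
provided by (a) and (b). This produces a surjection $\bigotimes_i H^{n_i}(V,\C)\twoheadrightarrow H^n(X,\OO_X)$. By (c), the image of any tensor factor lying in $F^1H^{n_i}(V,\C)$ lies in $F^1H^n(V,\C)$ and is therefore killed. Hence the map factors through $\bigotimes_i\gr^0_F H^{n_i}(V,\C)=\bigotimes_i H^{n_i}(X,\OO_X)$, and the factored map is precisely the cup product of (P3), which is therefore surjective.

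For (P1)$\Rightarrow$(P3), the strategy is to invoke the Bloch--Srinivas principle of ``decomposition of the diagonal.'' Applying the hypothesis (P1) at the generic point of $X$ and spreading out yields an equality in $A^n(X\times X)_\QQ$ of the form $\Delta_X=\sum_j\Gamma_j+\gamma$, where each $\Gamma_j$ is a correspondence constructed from exterior products of pullbacks of cycle classes of codimensions $n_i$ (encoding the multiplicative structure), and $\gamma$ is supported on $D\times X$ for some proper closed $D\subset X$. Viewed as actions on $H^n(X,\OO_X)$: the term $\gamma$ factors through $H^n(D,\OO_D)$, which vanishes for dimension reasons, so $\gamma$ acts as zero; each $\Gamma_j$ realizes the identity of $H^n(X,\OO_X)$ through the cup product $\bigotimes_i H^{n_i}(X,\OO_X)\to H^n(X,\OO_X)$, by compatibility of intersection products with cup products on $\gr^0_F$.

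The principal obstacle is the (P1)$\Rightarrow$(P3) direction: one must carefully execute the spread-out step, verify that the $\Gamma_j$ act on $H^n(X,\OO_X)$ through the intended cup products in coherent cohomology, and arrange the boundary correspondence $\gamma$ to lie on a proper subvariety so that its action on $H^n(X,\OO_X)$ truly vanishes. By contrast, the (P2)$\Rightarrow$(P3) implication is a formal consequence of the filtration facts (a)--(c).
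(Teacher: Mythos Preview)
This theorem is not proved in the present paper but quoted from \cite{ESV}; the paper only records that (P1)$\Rightarrow$(P3) is obtained there via a Bloch--Srinivas style decomposition of the diagonal (indeed, the final remark notes that \cite{ESV} produce $\Delta=C_1\cdots C_r+\Gamma_1+\Gamma_2$ in $A^n(X\times X)$ with $C_i\in A^{n_i}(X\times X)$ ``using the method of \cite{BS}''). Your outline matches this description on both counts: the Hodge-theoretic reduction for (P2)$\Rightarrow$(P3) via $\gr^0_F H^m(V,\C)\cong H^m(X,\OO_X)$ and $N^1\subset F^1$ is correct and standard, and your spread-out strategy for (P1)$\Rightarrow$(P3) is exactly the approach the paper attributes to \cite{ESV}, including your correct identification of the spreading step as the place where the real work lies.
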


The implication from (P1) to (P3) is a kind of Mumford theorem \cite{M}, and the proof in \cite{ESV} is motivated by Bloch's proof \cite{B}, \cite{B0} of Mumford's theorem using a ``decomposition of the diagonal'' argument. As noted in \cite[remark 2]{ESV}, the generalized Hodge conjecture would imply that (P2) and (P3) are equivalent.\footnote{It is somewhat frustrating that it is not known unconditionally whether (P1) implies (P2), i.e. without assuming the generalized Hodge conjecture. Apparently Esnault, Srinivas and Viehweg had claimed to prove this in an earlier version of their paper, but the argument was found to be incomplete \cite[remark 2]{ESV}.}

It seems natural to conjecture the converse implication (this is discussed in \cite[remark 3]{ESV}):

\begin{nonumberingcon} Let $X$ be a smooth complete variety. Then (P2) implies (P1).
\end{nonumberingcon}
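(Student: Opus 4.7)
I would target the weak version of the conjecture stated in the abstract of this paper, namely the case $X$ smooth projective of dimension $n \le 5$, satisfying the Lefschetz standard conjecture and having finite--dimensional motive. A first reduction is to replace $V$ by a smooth projective compactification $\bar X$: both (P1) and (P2) are controlled by $\bar X$ modulo contributions from the boundary, which carry positive coniveau and hence do not affect the statements modulo $N^1$ respectively modulo cycles supported on divisors.

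The central tool is Vial's refined Chow--K\"unneth decomposition. Under the given hypotheses, this provides mutually orthogonal projectors $\pi_{i,j} \in CH^n(\bar X \times \bar X)_\QQ$ summing to the diagonal, such that the submotive $h^{i,j}(\bar X) := (\bar X, \pi_{i,j})$ has cohomological realization equal to the graded piece $\operatorname{Gr}_N^j H^i(\bar X, \QQ)$ of the coniveau filtration. A crucial feature, available precisely in the range $n \le 5$, is that on each refined piece the Chow group sits in a single, predictable codimension, and morphisms between refined pieces are detected on cohomology thanks to finite--dimensionality.

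The intersection product defines a morphism of Chow motives
\[ \mu : h^{n_1}(\bar X) \otimes \cdots \otimes h^{n_r}(\bar X) \longrightarrow h(\bar X), \]
whose cohomological realization recovers the cup product map appearing in (P2). The point is that the part of $A^n(\bar X)_\QQ$ which survives restriction to a sufficiently small $V$ is controlled, via the Bloch--Beilinson--type correspondence made precise by Vial, by the coniveau--zero summand of $h^n(\bar X)$. Surjectivity of $\mu$ into this summand on Chow groups is then equivalent to surjectivity on the cohomological realization, which is provided by (P2): the key role of finite--dimensionality is that an obstruction would be a morphism of finite--dimensional motives vanishing on cohomology, hence smash--nilpotent, hence zero modulo torsion.

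The main obstacle I foresee is the compatibility of the refined projectors $\pi_{i,j}$ with the multiplicative structure. One must verify that $\mu$ respects the refined decomposition in a way matching the cup product decomposition on cohomology, and identify the image of $\mu$ composed with each projector; this is a delicate bookkeeping task, and the dimension restriction $n \le 5$ enters because only in this range does Vial's construction produce a sharp enough splitting. A further subtle point is the passage from the cohomology of the open $V$ modulo coniveau to the cohomology of $\bar X$: the mixed Hodge structure on $H^n(V,\QQ)$ and the compatibility of coniveau with the restriction $H^n(\bar X) \to H^n(V)$ must be pinned down before Vial's decomposition can be brought to bear. Removing the dimension restriction, let alone proving the full conjecture, appears to require substantially new input.
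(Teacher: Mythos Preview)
The statement you are asked about is a conjecture; the paper does not prove it in general but only the special case recorded as Theorem~\ref{main} (dimension $\le 5$, $B(X)$, finite--dimensional motive, and with the niveau filtration $\wt{N}^1$ in place of $N^1$). You correctly identify this target and the two main inputs---Vial's refined Chow--K\"unneth decomposition and Kimura nilpotence---but the heart of the argument is missing, and one step is actually wrong.

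The sentence ``an obstruction would be a morphism of finite--dimensional motives vanishing on cohomology, hence smash--nilpotent, hence zero modulo torsion'' is the problem. First, Kimura's theorem gives \emph{composition}--nilpotence of a numerically trivial self--correspondence, not smash--nilpotence. Second, and more seriously, nilpotent does not mean zero: you cannot conclude surjectivity of $\mu$ on Chow groups from surjectivity on cohomology by this route. What the paper does instead is the following chain. Since $B(X)$ holds, homological and numerical equivalence coincide on powers of $X$ and Jannsen's semisimplicity applies; hence the surjection of homological motives coming from (P2) \emph{splits}, producing a correspondence $C$ with
\[
\Pi_{n,0}\ =\ \Pi_{n,0}\circ\Delta^r\circ(\pi_{n_1}\times\cdots\times\pi_{n_r})\circ C\ \ \in H^{2n}(X\times X).
\]
Next, a feature of Vial's projectors you do not mention, but which is essential, is that every $\Pi_{i,j}$ with $(i,j)\neq(n,0)$ is supported on $(D\times X)\cup(X\times D)$ for a divisor $D$. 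This yields a homological identity
\[
\Delta\ =\ \Delta_{(n_1,\ldots,n_r)}+\Gamma_1+\Gamma_2,
\]
with $\Gamma_1,\Gamma_2$ supported on $D\times X$, resp.\ $X\times D$. Only now does nilpotence enter: the difference is homologically (hence numerically) trivial, so some $\circ N$--th power vanishes in $A^n(X\times X)$. Expanding $\Delta^{\circ N}$ and tracking which monomials act nontrivially on $A^nX$ (those containing a $\Gamma_1$ do not, for dimension reasons) gives
\[
A^nX\ =\ (\Delta^\prime_{(n_1,\ldots,n_r)}\circ\hbox{something})_\ast A^nX\ +\ (\Gamma_2\circ\hbox{something})_\ast A^nX,
\]
and a separate lemma shows the first summand lands in the image of the intersection product, because the $\pi_{n_i}$ are supported on $Y_{n_i}\times X$ with $\dim Y_{n_i}=n_i$, forcing the action on $0$--cycles to factor through $A^{n_1}X\otimes\cdots\otimes A^{n_r}X$. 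The second summand disappears on $V=X\setminus D$.

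In short: your outline lacks the semisimplicity/splitting step, the support property of the refined projectors, the expansion of the nilpotent identity, and the factorisation lemma; and the claim that nilpotence alone kills the obstruction is incorrect. Two smaller points: the paper works throughout with Vial's \emph{niveau} filtration $\wt{N}^\ast$, not the coniveau filtration (they agree for $n\le 3$ but not in general), and since the theorem already assumes $X$ smooth projective, the passage to a compactification $\bar X$ is not needed---condition (P2) in Theorem~\ref{main} is stated directly for $X$.
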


This can be considered a multiplicative version of Bloch's conjecture; indeed for surfaces of geometric genus $0$ this conjecture is equivalent to Bloch's conjecture \cite[remark 3]{ESV}.

The object of this note is to show this conjecture can be proven in some special cases:

\begin{nonumbering} Let $X$ be a smooth projective variety of dimension $n\le 3$, rationally dominated by curves. Then (P2) implies (P1).
\end{nonumbering}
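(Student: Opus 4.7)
The plan is to derive this theorem as a corollary of the general result announced in the abstract, which already provides the implication (P2) $\Rightarrow$ (P1) for smooth complete varieties of dimension $\leq 5$ that have finite-dimensional motive and satisfy the Lefschetz standard conjecture. It therefore suffices to check that both of these hypotheses are automatic for a variety $X$ of dimension $n \leq 3$ that is rationally dominated by curves.

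For the finite-dimensionality of $h(X)$: being rationally dominated by curves produces, after birational modification, a generically finite dominant morphism onto $X$ from a smooth projective variety built by successive products of smooth projective curves. The associated correspondence realizes $h(X)$ as a direct summand of a tensor product of curve motives. By Kimura--O'Sullivan, curves have finite-dimensional motive, this property is preserved under tensor products, and it passes to direct summands; hence $h(X)$ is finite-dimensional. For the Lefschetz standard conjecture: in dimension $\leq 3$ it reduces to algebraicity of the inverse of $L : H^1(X) \to H^{2n-1}(X)$ and, for $n=3$, also of $L : H^2(X) \to H^4(X)$. For a product of curves these inverses are classical (they come from the Künneth decomposition for curves), and algebraicity descends to $X$ through the dominating correspondence.

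With both hypotheses verified, the general theorem applies to $X$ and delivers the implication (P2) $\Rightarrow$ (P1). The main obstacle is of course the proof of the general theorem itself, which rests on Vial's refined Chow--K\"unneth decomposition $\{\Pi_{i,j}\}$: here $\Pi_{i,j}$ projects onto the part of $H^i(X)$ of coniveau exactly $j$, so that in particular $\Pi_{n,0}$ cuts out $H^n(X)/N^1$. The delicate step is to show that on $A^n V_\QQ$ the image of $\Pi_{n,0}$ injects via the cycle class map into $H^n(X)/N^1$, whence the cohomological surjectivity furnished by (P2) can be lifted to Chow-theoretic surjectivity by intersection products. The remaining summands $\Pi_{i,j}$ with $j \geq 1$ or $i < n$ correspond to cycles supported on subvarieties of positive codimension, and must independently be shown to be spanned by intersection products of cycles of the prescribed codimensions; this is where the hypothesis $n \leq 3$ and the strong control afforded by the dominating curves are essential.
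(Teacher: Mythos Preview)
Your reduction is correct and matches the paper: one checks that a variety of dimension $\le 3$ rationally dominated by products of curves has finite-dimensional motive and satisfies $B(X)$, then invokes the main theorem (together with its corollary, which for $n\le 3$ replaces the niveau filtration $\widetilde{N}^1$ by the coniveau filtration $N^1$ on $H^n$). The paper does exactly this, citing Vial for finite-dimensionality.

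Your sketch of how the main theorem itself is proved, however, is off. You claim the delicate step is that the image of $\Pi_{n,0}$ on $A^nV$ \emph{injects via the cycle class map} into $H^n(X)/N^1$. This cannot be right: for $0$-cycles the cycle class map to $H^{2n}$ records only the degree and has enormous kernel; no such injectivity is available. The paper's mechanism is entirely different. From (P2) and semisimplicity of the relevant subcategory of homological motives one obtains a splitting, hence an equality $\Pi_{n,0}=\Delta_{(n_1,\dots,n_r)}$ in $H^{2n}(X\times X)$, where $\Delta_{(n_1,\dots,n_r)}$ is built from the small diagonal, the K\"unneth projectors, and the splitting correspondence. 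Since the other $\Pi_{i,j}$ are supported on $(D\times X)\cup(X\times D)$, this gives $\Delta=\Delta_{(n_1,\dots,n_r)}+\Gamma_1+\Gamma_2$ homologically. The \emph{nilpotence theorem} (this is precisely where finite-dimensionality enters) then forces some power of $\Delta-\Delta_{(n_1,\dots,n_r)}-\Gamma_1-\Gamma_2$ to vanish in $A^n(X\times X)$. Expanding and acting on $A^nX$, the $\Gamma_1$ terms kill $0$-cycles for dimension reasons and the $\Gamma_2$ terms land in cycles supported on $D$; what remains is the $\Delta_{(n_1,\dots,n_r)}$ contribution, which a short lemma shows factors through the intersection-product map. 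So the remaining summands are not ``independently shown to be spanned by intersection products'' as you write; they are simply discarded by restricting to $V=X\setminus D$.

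You also misattribute the hypotheses. The bound $n\le 3$ in the stated theorem is \emph{not} used to control the $\Pi_{i,j}$ (the main theorem already works for $n\le 5$); its sole purpose is to ensure $\widetilde{N}^1H^nX=N^1H^nX$, so that the coniveau version of (P2) from the introduction coincides with the niveau version required by the main theorem. And ``rationally dominated by curves'' serves only to supply finite-dimensionality of the motive (and $B(X)$); it provides no additional geometric leverage inside the argument.
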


This follows from a more general statement (theorem \ref{main}). Actually, our argument works for varieties $X$ of dimension up to $5$ that satisfy the Lefschetz standard conjecture and have finite--dimensional motive in the sense of Kimura \cite{Kim}, provided we replace (P2) by a variant involving Vial's niveau filtration $\wt{N}^\ast$ \cite{V4} instead of $N^\ast$. 

This result is hardly surprising: since the appearance of Kimura's landmark paper \cite{Kim}, where it is shown that finite--dimensionality implies the Bloch conjecture for surfaces, there have been a great many results attesting to the usefulness of finite--dimensionality \cite{J4}, \cite{An}, \cite{Iv}, \cite{Kah}, \cite{KMP}, \cite{GP}, \cite{GP2}, \cite{V4}, \cite{Y}, \cite{Lat}. The present note is but one more instance of this general principle, illustrating how nicely finite--dimensionality allows to bridge the abyss separating homological equivalence from rational equivalence.

\begin{convention} In this note, the word {\sl variety\/} will refer to an irreducible reduced scheme of finite type over $\C$, endowed with the Zariski topology. A {\sl subvariety\/} is a (possibly reducible) reduced subscheme which is equidimensional. The Chow group of $j$--dimensional algebraic cycles on $X$ with $\QQ$--coefficients modulo rational equivalence is denoted $A_jX$; for $X$ smooth of dimension $n$ the notations $A_jX$ and $A^{n-j}X$ will be used interchangeably. Caveat: note that what we denote $A^jX$ is elsewhere often denoted $CH^j(X)_{\QQ}$.
The Griffiths group $\grif^j$ is the group of codimension $j$ cycles that are homologically trivial modulo algebraic equivalence, again with $\QQ$--coefficients. In an effort to lighten notation, we will often write $H^jX$ or $H_jX$ to indicate singular cohomology $H^j(X,\QQ)$ resp. Borel--Moore homology $H_j(X,\QQ)$.
\end{convention}

\section{Preliminaries}

Let $X$ be a smooth projective variety of dimension $n$, and $h\in H^2(X,\QQ)$ the class of an ample line bundle. The hard Lefschetz theorem asserts that the map
  \[  L^{n-i}\colon H^i(X,\QQ)\to H^{2n-i}(X,\QQ)\]
  obtained by cupping with $h^{n-i}$ is an isomorphism, for any $i< n$. One of the standard conjectures asserts that the inverse isomorphism is algebraic.

\begin{definition}[Lefschetz standard conjecture]{} Given a variety $X$, we say that $B(X)$ holds if for all ample $h$, and all $i<n$ the isomorphism 
  \[  (L^{n-i})^{-1}\colon 
  H^{2n-i}(X,\QQ)\stackrel{\cong}{\rightarrow} H^i(X,\QQ)\]
  is induced by a correspondence.
 \end{definition}  
 
 \begin{remark} It is known that $B(X)$ holds for the following varieties: curves, surfaces, abelian varieties \cite{K0}, \cite{K}, 3folds not of general type \cite{Tan}, $n$--dimensional varieties $X$ which have $A_i(X)_{}$ supported on a subvariety of dimension $i+2$ for all $i\le{n-3\over 2}$ \cite[Theorem 7.1]{V}, $n$--dimensional varieties $X$ which have $H_i(X)=N^{\llcorner {i\over 2}\lrcorner}H_i(X)$ for all $i>n$ \cite[Theorem 4.2]{V2}, products and hyperplane sections of any of these.
 \end{remark}

\begin{definition}[Coniveau filtration \cite{BO}]\label{con} Let $X$ be a quasi--projective variety. The {\em coniveau filtration\/} on cohomology and on homology is defined as
  \[\begin{split}   N^c H^i(X,\QQ)&= \sum \ima\bigl( H^i_Y(X,\QQ)\to H^i(X,\QQ)\bigr)\ ;\\
                           N^c H_i(X,\QQ)&=\sum \ima \bigl( H_i(Z,\QQ)\to H_i(X,\QQ)\bigr)\ ,\\
                           \end{split}\]
   where $Y$ runs over codimension $\ge c$ subvarieties of $X$, and $Z$ over dimension $\le i-c$ subvarieties.
 \end{definition}

%We now recall the statement of the ``Voisin standard conjecture'' (this is \cite[Conjecture 0.6]{V0}):

%\begin{conjecture}[Voisin standard conjecture]\label{csv} Let $X$ be a smooth projective variety, and $Y\subset X$ closed with complement $U$. Then the natural sequence
 % \[  N^i H_{2i}(Y,\QQ)\to N^i H_{2i}(X,\QQ)\to N^i H_{2i}(U,\QQ)\to 0\]
%  is exact for any $i$.
%\end{conjecture}

%\begin{remark} Hodge theory gives an exact sequence
%  \[    \gr^W_{-2i} H_{2i}Y\cap F^{-i}\to H_{2i}X\cap F^{-i}\to \gr^W_{-2i} H_{2i}U\cap F^{-i}\to 0\ ,\]
 % where $W$ denotes Deligne's weight filtration, and $F$ the Hodge filtration on $H_\ast(-,\C)$ \cite{PS}.
 % Hence if the Hodge conjecture (that is, its homology version for singular varieties \cite{J}) is true, then conjecture \ref{csv} is true. 
  
%What's more, this conjecture fits in very neatly with the classical standard conjectures: Voisin shows that conjecture \ref{csv} plus the algebraicity of the K\"unneth components of the diagonal (for all varieties) is equivalent to the Lefschetz standard conjecture (for all varieties) \cite[Proposition 1.6]{V0}.
% \end{remark}
  
%\begin{remark}\label{csvtrue} Conjecture \ref{csv} is obviously true for $i\le 1$ (this follows from the truth of the Hodge conjecture for curve classes), and for $i\ge \dim Y-1$ (where it follows from the Hodge conjecture for divisors).
%\end{remark} 

Vial introduced the following variant of the coniveau filtration:

\begin{definition}[Niveau filtration \cite{V4}] Let $X$ be a smooth projective variety. The {\em niveau filtration} on homology is defined as
  \[ \wt{N}^j H_i(X)=\sum_{\Gamma\in A_{i-j}(Z\times X)_{}} \ima\bigl( H_{i-2j}(Z)\to H_i(X)\bigr)\ ,\]
  where the union runs over all smooth projective varieties $Z$ of dimension $i-2j$, and all correspondences $\Gamma\in A_{i-j}(Z\times X)_{}$.
  The niveau filtration on cohomology is defined as
  \[   \wt{N}^c H^iX:=   \wt{N}^{c-i+n} H_{2n-i}X\ .\]
  
\end{definition}

\begin{remark}
The niveau filtration is included in the coniveau filtration:
  \[ \wt{N}^j H^i(X)\subset N^j H^i(X)\ .\] 
  These two filtrations are expected to coincide; indeed, Vial shows this is true if and only if the Lefschetz standard conjecture is true for all varieties \cite[Proposition 1.1]{V4}. 
  \end{remark}

The main ingredient we will use in this note is Kimura's nilpotence theorem:

\begin{theorem}[Kimura \cite{Kim}]\label{nilp} Let $X$ be a smooth projective variety of dimension $n$ with finite--dimensional motive. Let $\Gamma\in A^n(X\times X)_{}$ be a correspondence which is numerically trivial. Then there is $N\in\NN$ such that
     \[ \Gamma^{\circ N}=0\ \ \ \ \in A^n(X\times X)_{}\ .\]
\end{theorem}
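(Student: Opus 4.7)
The plan is to use Kimura's finite-dimensionality structure to reduce the claim to a Cayley--Hamilton style computation inside the rigid tensor category of Chow motives. By hypothesis, $h(X)$ splits as $M^+\oplus M^-$ with $M^+$ evenly and $M^-$ oddly finite-dimensional, so that $\wedge^{k+1}M^+=0$ and $\mathrm{Sym}^{m+1}M^-=0$ for some integers $k,m$. An endomorphism $\Gamma\in\mathrm{End}(h(X))$ then decomposes into four blocks $\Gamma^{\varepsilon\varepsilon'}\colon M^{\varepsilon'}\to M^{\varepsilon}$. Since $\Gamma$ acts as zero on $H^\ast(X,\QQ)$, each diagonal block is itself numerically trivial as an endomorphism of $M^{\pm}$, while the off-diagonal blocks compose in either order to give numerically trivial endomorphisms of $M^+$ or $M^-$. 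So the first step is to establish the claim for a numerically trivial endomorphism of a purely even or purely odd finite-dimensional motive, and then feed it back to deduce that a sufficiently high power of $\Gamma$ vanishes blockwise.

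For $f\colon M\to M$ with $M$ evenly finite-dimensional and $\wedge^{k+1}M=0$, I would invoke the universal Cayley--Hamilton identity valid in every rigid $\QQ$-linear symmetric tensor category:
\[
f^{\circ(k+1)}-\sigma_1(f)\,f^{\circ k}+\sigma_2(f)\,f^{\circ(k-1)}-\cdots+(-1)^{k+1}\sigma_{k+1}(f)\,\mathrm{id}_M=0,
\]
where $\sigma_i(f)=\mathrm{tr}(\wedge^i f)\in\mathrm{End}(\mathbf{1})=\QQ$. Newton's identities express each $\sigma_i(f)$ as a polynomial in the power sums $s_j(f)=\mathrm{tr}(f^{\circ j})$. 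The crucial point is that the categorical trace $\mathrm{tr}(f^{\circ j})$ is the intersection number of the iterated composition with the diagonal $\Delta_X$, so numerical triviality of $\Gamma$ forces every $s_j(f)$, and hence every $\sigma_i(f)$, to vanish. The identity then collapses to $f^{\circ(k+1)}=0$.

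The oddly finite-dimensional case is dual: the vanishing $\mathrm{Sym}^{m+1}M^-=0$ yields the super-analogue of the identity above, whose coefficients are again universal polynomials in the traces of the powers of the odd component, and these vanish for the same numerical reason. Combining both cases and handling the mixed blocks by the composition argument above yields a single integer $N$ with $\Gamma^{\circ N}=0$ in $A^n(X\times X)$.

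I expect the main obstacle to be the first step: making blockwise nilpotence genuinely imply nilpotence of $\Gamma$, and verifying that numerical triviality really does descend to each Kimura summand. Once this is done, the remainder is formal manipulation inside the tensor category, the non-trivial input being the equivalence between numerical triviality and the vanishing of all iterated categorical traces. Ultimately the whole reduction works because $\mathrm{End}(\mathbf{1})=\QQ$ is a field, so the ideal of trace-vanishing endomorphisms of a finite-dimensional object is nilpotent.
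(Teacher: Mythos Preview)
The paper does not prove this theorem; it is quoted from \cite{Kim} as a preliminary result and used as a black box in the proof of Theorem~\ref{main}. So there is no ``paper's own proof'' to compare against.

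That said, your sketch is essentially the standard argument, going back to Kimura and streamlined by Andr\'e--Kahn. The Cayley--Hamilton step for a purely even or purely odd object is exactly right: the categorical trace of a self-correspondence is its intersection number with the diagonal, so numerical triviality of $\Gamma$ kills all power sums $s_j$, hence all $\sigma_i$, and the characteristic-polynomial identity collapses to $f^{\circ d}=0$ (with $d$ the Kimura dimension). The obstacle you flag is also the genuine one. The clean way to resolve it is to observe that the numerically trivial endomorphisms form a two-sided \emph{ideal} $\mathcal{N}\subset\mathrm{End}(h(X))$: since the projectors $p^{\pm}$ onto $M^{\pm}$ lie in $\mathrm{End}(h(X))$, all four blocks of $\Gamma$ lie in $\mathcal{N}$, and any word in these blocks that starts and ends at $M^{+}$ (resp.\ $M^{-}$) lies in $\mathcal{N}\cap\mathrm{End}(M^{+})$ (resp.\ $\mathcal{N}\cap\mathrm{End}(M^{-})$). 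One then needs the stronger statement that $\mathcal{N}\cap\mathrm{End}(M^{\pm})$ is a \emph{nilpotent ideal} (not merely that each element is nilpotent); this is what the Cayley--Hamilton argument actually yields once you apply it to products of elements of $\mathcal{N}$, and it is what makes the block recombination go through. With that in hand, a high power of $\Gamma$ has each diagonal block equal to a sum of products of $\ge N$ elements of $\mathcal{N}\cap\mathrm{End}(M^{\pm})$, hence zero, and nilpotence of $\Gamma$ follows.
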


 We refer to \cite{Kim}, \cite{An}, \cite{Iv}, \cite{MNP} for the definition of finite--dimensional motive. Conjecturally, any variety has finite--dimensional motive \cite{Kim}. What mainly concerns us in the scope of this note, is that there are quite a few non--trivial examples, giving rise to interesting applications:
 
\begin{remark} 
The following varieties have finite--dimensional motive: abelian varieties, varieties dominated by products of curves \cite{Kim}, $K3$ surfaces with Picard number $19$ or $20$ \cite{P}, surfaces not of general type with vanishing geometric genus \cite[Theorem 2.11]{GP}, Godeaux surfaces \cite{GP}, 3folds and $4$folds with nef tangent bundle \cite{I}, \cite[Example 3.16]{V3}, \cite{I2}, certain 3folds of general type \cite[Section 8]{Vial}, varieties of dimension $\le 3$ rationally dominated by products of curves \cite[Example 3.15]{V3}, varieties $X$ with $A^i_{AJ}X_{\QQ}=0$ for all $i$ \cite[Theorem 4]{V2}, products of varieties with finite--dimensional motive \cite{Kim}.
\end{remark}

\begin{remark}
It is worth pointing out that up till now, all examples of finite-dimensional motives happen to be in the tensor subcategory generated by Chow motives of curves. On the other hand, ``many'' motives
are known to lie outside this subcategory, e.g. the motive of a general hypersurface in $\PP^3$ \cite[Remark 2.34]{Ay} (here ``general'' means ``outside a countable union of 
Zariski--closed proper subsets'').
\end{remark}

There exists another nilpotence result, which predates and prefigures Kimura's theorem:

\begin{theorem}[Voisin \cite{V9}, Voevodsky \cite{Voe}]\label{VV} Let $X$ be a smooth projective algebraic variety of dimension $n$, and $\Gamma\in A^n(X\times X)_{}$ a correspondence which is algebraically trivial. Then there is $N\in\NN$ such that
     \[ \Gamma^{\circ N}=0\ \ \ \ \in A^n(X\times X)_{}\ .\]
    \end{theorem}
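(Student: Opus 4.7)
The strategy is to factor $\Gamma$ through the motive $h^1(C)$ of a suitable curve $C$ and to exploit the (unconditional) odd finite--dimensionality of $h^1(C)$ in the sense of Kimura. By definition of algebraic equivalence there exist a smooth projective curve $C$, two points $a,b\in C(\C)$, and a correspondence $Z\in A^n(C\times X\times X)$ such that
\[  \Gamma\ =\ Z_*\bigl([a]-[b]\bigr)\ :=\ (p_{23})_*\bigl(p_1^*([a]-[b])\cdot Z\bigr)\ . \]

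Fix a Chow--K\"unneth decomposition $h(C)=h^0(C)\oplus h^1(C)\oplus h^2(C)$. Since the divisor $[a]-[b]$ is homologically trivial on $C$, it defines a morphism of Chow motives $\alpha\colon\mathbf{1}\to h^1(C)$; meanwhile $Z$, after composition with the projector onto $h^1(C)$, defines a morphism $\phi\colon h^1(C)\otimes h(X)\to h(X)$ with the property that $\Gamma=\phi\circ(\alpha\otimes\mathrm{id}_{h(X)})$ as an endomorphism of $h(X)$.

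Iterating, the composition law for correspondences yields
\[ \Gamma^{\circ N}\ =\ \Phi_N\circ\bigl(\alpha^{\otimes N}\otimes\mathrm{id}_{h(X)}\bigr)\ ,  \]
where $\Phi_N\colon h^1(C)^{\otimes N}\otimes h(X)\to h(X)$ is built by iterating $\phi$ across the middle $X$--factors. The morphism $\alpha^{\otimes N}\colon\mathbf{1}\to h^1(C)^{\otimes N}$ is $S_N$--invariant in the signed convention of Kimura's tensor category, so it factors through the odd symmetric power. Now $h^1(C)$ is a direct summand of the motive of $\mathrm{Jac}(C)$, which is finite--dimensional by Kimura \cite{Kim}; more precisely $h^1(C)$ is oddly finite--dimensional, so $\mathrm{Sym}^N h^1(C)=0$ for $N$ larger than the odd dimension of $h^1(C)$ (namely $N\ge 2g+1$, where $g$ is the genus of $C$). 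Hence $\alpha^{\otimes N}=0$ and consequently $\Gamma^{\circ N}=0$.

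The main technical obstacle is the second display: one must check that the iterated composition of correspondences cleanly separates the curve parameters (which stack tensorially) from the contractions over the middle copies of $X$, so that $\Gamma^{\circ N}$ factors honestly through $h^1(C)^{\otimes N}$ with the expected $S_N$--symmetry. This is a diagram chase relying on the K\"unneth decomposition for $C^N\times X^{N+1}$ and the multilinearity of the intersection product; once carried out, the conclusion follows from odd finite--dimensionality as above.
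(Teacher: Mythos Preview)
The paper does not give its own proof of this statement: it is quoted from Voisin \cite{V9} and Voevodsky \cite{Voe} as a known preliminary result and used as a black box in the proof of theorem~\ref{main}. Your argument is essentially Voevodsky's original proof recast in Kimura's language, and it is correct in outline. One small terminological wobble: in Kimura's standard setup there is no ``signed convention'' at this step --- the map $\alpha^{\otimes N}$ is invariant under the \emph{ordinary} permutation action of $S_N$ (since $\mathbf{1}^{\otimes N}=\mathbf{1}$ carries the trivial action) and hence factors through the ordinary symmetric power $\mathrm{Sym}^{N} h^1(C)$; odd finite--dimensionality of $h^1(C)$ means precisely that this ordinary $\mathrm{Sym}^{N}$ vanishes for $N>2g$. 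The factorization $\Gamma^{\circ N}=\Phi_N\circ(\alpha^{\otimes N}\otimes\mathrm{id})$ that you flag as the technical crux is indeed a routine induction on $N$ using bifunctoriality of the tensor product, and carries no hidden difficulty.
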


\section{Main result}

In this section, we prove the main result of this note:

\begin{theorem}\label{main} Let $X$ be a smooth projective variety over $\C$ of dimension $n$. Suppose

\noindent{(\rom1)} $n\le 5$;

\noindent{(\rom2)} $B(X)$ is true;

\noindent{(\rom3)} $X$ has finite--dimensional motive, or $\grif^n(X\times X)=0$.

%\noindent{(P1$^\prime$)} There exists a decomposition of the diagonal
 % \[   \Delta= C_1\cdot C_2\cdots C_r+\Gamma_1+\Gamma_2\ \ \in A^n(X\times X)\ ,\]
%  where $C_i\in A^{n_i}(X\times X)$, and $\Gamma_1, \Gamma_2$ are supported on $D\times X$ (resp. on $X\times D$) for a divisor $D\subset X$;

Given $n_i\in\NN$ with $n=n_1+\cdots n_r$, suppose that

\noindent{(P2)} Cup product induces a surjection
  \[ H^{n_1}(X)\otimes H^{n_2}(X)\otimes\cdots\otimes H^{n_r}(X)\ \to\ H^n(X)/\wt{N}^1\ .\]

%\noindent{(P2$^\prime$)} There exists a decomposition of the diagonal
  %\[   \Delta= C_1\cdot C_2\cdots C_r+\Gamma_1+\Gamma_2\ \ \in H^{2n}(X\times X)\ ,\]
 % where $C_i$ and $\Gamma_j$ are as in (P2$^\prime$).
  
Then

\noindent{(P1)}
There exists an open $V\subset X$, such that intersection product induces a surjection
  \[   A^{n_1}V_{}\otimes A^{n_2}V_{}\otimes\cdots\otimes A^{n_r}V_{}\ \to\ A^nV_{}\ .\]
  \end{theorem}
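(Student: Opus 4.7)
The plan is to combine Vial's refined Chow--K\"unneth decomposition, available for $X$ under hypotheses (i)--(iii), with a lifting argument based on the nilpotence theorems recalled in Section~2. Under (i), (ii) and the finite--dimensionality clause of (iii), one expects mutually orthogonal idempotents $\pi_{i,j}\in A^n(X\times X)$ summing to the diagonal $\Delta_X$, where $\pi_{i,j}$ acts as the identity on $\gr_j^{\wt{N}} H_i X$ and as zero on every other graded piece, and where each $\pi_{i,j}$ with $j>0$ factors through a correspondence supported on a subvariety of $X\times X$ whose second projection misses a suitable open $V\subset X$. After such a choice of $V$, all components $\pi_{i,j}$ with $j>0$ or $i\ne 2n$ act trivially on $A^nV$, so one is reduced to realizing the remaining projector $\pi_{2n,n}$ on $A^nV$ by a cycle coming from intersection products.

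The second step is to translate (P2) into such a cycle. By (P2) we may write $H^n(X)/\wt{N}^1$ as the span of classes $\alpha_{k,1}\cup\cdots\cup\alpha_{k,r}$ with $\alpha_{k,i}\in H^{n_i}(X)$. Using $B(X)$, the K\"unneth projectors are algebraic, and we can represent each $\alpha_{k,i}$ by a correspondence $\Gamma_{k,i}\in A^{n_i}(X\times X)$ whose cohomological action is ``cup with $\alpha_{k,i}$'' on the appropriate graded piece. The cycle
\[
\Gamma\ :=\ \sum_k \Gamma_{k,1}\circ\Gamma_{k,2}\circ\cdots\circ\Gamma_{k,r}\ \in\ A^n(X\times X)
\]
has the feature that its action on $A^nX$ factors through $A^{n_1}X\otimes\cdots\otimes A^{n_r}X$ followed by the intersection product, and by construction its cohomology class agrees with that of $\pi_{2n,n}$ on the niveau--$0$ piece of $H^n$. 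A careful use of orthogonality in Vial's decomposition, together with (P2), then lets one modify $\Gamma$ by correspondences with support in strictly positive niveau (absorbed into the complement $X\setminus V$) so that $[\Gamma]=[\pi_{2n,n}]$ in $H^{2n}(X\times X)$.

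For the third step, the difference $\pi_{2n,n}-\Gamma$ is homologically trivial. Under the finite--dimensionality alternative in (iii), Kimura's nilpotence theorem (Theorem~\ref{nilp}) gives some power vanishing in $A^n(X\times X)$; under the alternative $\grif^n(X\times X)=0$, the difference is algebraically trivial and the Voisin--Voevodsky nilpotence theorem (Theorem~\ref{VV}) applies instead. In either case the standard argument--combining nilpotence with the idempotence relation $\pi_{2n,n}\circ\pi_{2n,n}=\pi_{2n,n}$--shows that $\pi_{2n,n}$ and $\Gamma$ induce the same map on $A^nV$ after shrinking $V$ further to avoid the projections of the error cycles. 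Restricted to $V$, the diagonal acts as the identity on $A^nV$, so this identifies $A^nV$ with the image of $\Gamma$, which by construction lies in the image of intersection product. This yields (P1).

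The main obstacle is the middle step: matching $\Gamma$ with $\pi_{2n,n}$ in the whole of $H^{2n}(X\times X)$, not merely on the piece one starts from. This is where the dimensional hypothesis $n\le 5$ and the standard conjecture $B(X)$ become essential, since it is exactly in this range that Vial's refined Chow--K\"unneth projectors $\pi_{i,j}$ exist, are mutually orthogonal, and control both cohomology and Chow groups finely enough to allow successive corrections of $\Gamma$ by correspondences whose support can be hidden in the complement of $V$.
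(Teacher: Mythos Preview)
Your overall architecture---Vial's refined Chow--K\"unneth projectors, then nilpotence, then support arguments---matches the paper's, but the middle step has a real gap. You propose correspondences $\Gamma_{k,i}\in A^{n_i}(X\times X)$ acting on cohomology as ``cup with $\alpha_{k,i}$'' for classes $\alpha_{k,i}\in H^{n_i}(X)$ furnished by (P2). This is impossible in general: the $\alpha_{k,i}$ are typically transcendental (for odd $n_i$ there are no nonzero algebraic classes in $H^{n_i}$ at all), and the operator ``cup with $\alpha$'' is induced by an algebraic correspondence only when $\alpha$ is itself algebraic. Even the gradings are wrong: a class in $A^{n_i}(X\times X)$ shifts cohomological degree by $2n_i-2n$, not by $n_i$, so your composition $\Gamma_{k,1}\circ\cdots\circ\Gamma_{k,r}$ does not land in $A^n(X\times X)$. (A minor slip: the relevant projector is $\Pi_{n,0}$, the one acting as the identity on $H_nX/\wt{N}^1$, not ``$\pi_{2n,n}$''.)

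The paper avoids lifting individual cohomology classes. It encodes cup product globally via the small diagonal $\Delta^r\subset X^{r+1}$, which \emph{is} algebraic, and reads (P2) as the statement that the morphism of homological motives
\[
f=\Pi_{n,0}\circ\Delta^r\circ(\pi_{n_1}\times\cdots\times\pi_{n_r})\colon\ (X,\pi_{n_1})\otimes\cdots\otimes(X,\pi_{n_r})\ \to\ (X,\Pi_{n,0})
\]
is surjective. Because $B(X)$ forces homological and numerical equivalence to coincide on powers of $X$, Jannsen's semisimplicity theorem yields a right inverse $C\in A^n(X\times X^r)$ with $\Pi_{n,0}=f\circ C$ in $H^{2n}(X\times X)$; this semisimplicity step is precisely what replaces your attempted class--by--class lifting. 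Nilpotence is then applied to $\Delta-(f\circ C)-\Gamma_1-\Gamma_2$, where $\Gamma_1,\Gamma_2$ absorb the remaining $\Pi_{i,j}$ and are supported on $D\times X$ resp.\ $X\times D$. Finally, the factorization of the resulting correspondence through intersection product is not automatic either: it uses the additional feature of Vial's construction that each $\pi_{n_i}$ is supported on $Y_{n_i}\times X$ with $\dim Y_{n_i}=n_i$, so that the action of $\Delta^r\circ(\pi_{n_1}\times\cdots\times\pi_{n_r})\circ C$ on $0$--cycles factors through $A^{n_1}X\otimes\cdots\otimes A^{n_r}X$ (this is the lemma inside the paper's proof).
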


  In dimension $\le 3$, the niveau filtration $\wt{N}^1$ can be replaced by the coniveau filtration $N^1$, and we obtain the result announced in the introduction:
  
  \begin{corollary}\label{cor} Let $X$ be as in theorem \ref{main}, and of dimension $n\le 3$. Then condition (P2) can be replaced by
  
  \noindent{(P2$^\prime$)} There is an open $V\subset X$ such that cup product induces a surjection
  \[ H^{n_1}(V)\otimes\cdots\otimes H^{n_r}(V)\ \to\ H^n(V)/{N}^1\ .\]
  
  If $n=2$, condition (P2) can be replaced by
  
  \noindent{(P2$^{\prime\prime}$)} There is an open $V\subset X$ such that cup product induces a surjection
  \[ H^{1}(V)\otimes H^{1}(V)\ \to\ H^2(V)/{F}^1\ ,\]
  where $F^\ast$ denotes the Hodge filtration.
  \end{corollary}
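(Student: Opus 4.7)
The plan is to deduce Corollary \ref{cor} from Theorem \ref{main} by showing that, for $n\le 3$, the hypothesis (P2$^\prime$) (respectively (P2$^{\prime\prime}$) in the case $n=2$) implies the condition (P2) of the theorem. A key preliminary is that in dimension $n\le 3$ the two filtrations coincide: $\wt{N}^1 H^n(X) = N^1 H^n(X)$, unconditionally. The inclusion $\wt{N}^1\subseteq N^1$ is automatic; for $n=1,2$ both filtrations are trivial or equal to $\mathrm{NS}(X)_\QQ$. For $n=3$, a class in $N^1 H^3(X)$ is the Gysin pushforward of a class in $H^1(\tilde Y)$ for some desingularization $\tilde Y$ of a codim-$1$ subvariety (hence a smooth surface); the weak Lefschetz theorem injects $H^1(\tilde Y)$ into $H^1(C)$ for a very ample smooth curve $C\subset\tilde Y$, so by Poincar\'e duality on $\tilde Y$ the transposed map $H^1(C)\twoheadrightarrow H^1(\tilde Y)$ is surjective. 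Composing with the Gysin map into $H^3(X)$ realizes the original class via a correspondence from a curve, placing it in $\wt{N}^1 H^3(X)$.

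To deduce (P2) from (P2$^\prime$), I would use that restriction descends to an injection $H^n(X)/N^1 H^n(X)\hookrightarrow H^n(V)/N^1 H^n(V)$: if $\alpha\in H^n(X)$ becomes coniveau-$1$ on $V$ then it is supported on a closed codim-$1$ subset of $V$ whose closure is codim-$1$ in $X$. Given $\alpha\in H^n(X)$, (P2$^\prime$) supplies classes $\beta_{i,j}\in H^{n_i}(V)$ with $\alpha|_V \equiv \sum_j \beta_{1,j}\cup\cdots\cup\beta_{r,j}\pmod{N^1 H^n(V)}$. Using Deligne's weight filtration on $V$ one decomposes $\beta_{i,j} = \tilde\beta_{i,j}|_V + e_{i,j}$ with $\tilde\beta_{i,j}\in H^{n_i}(X)$ and $e_{i,j}$ a residue-type class of weight $>n_i$ coming from the boundary $X\setminus V$. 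In the range $n\le 3,\ n_i\le 2$, the list of possible bidegrees is short and explicit, and identifying each $e_{i,j}$ with a Gysin image from the boundary strata allows one to check that every cross term in the expansion of $\prod_i(\tilde\beta_{i,j}|_V + e_{i,j})$ lies in $N^1 H^n(V)$. Injectivity then yields $\alpha \equiv \sum_j \tilde\beta_{1,j}\cup\cdots\cup\tilde\beta_{r,j}\pmod{N^1 H^n(X)}$, which by Step 1 is the same as modulo $\wt{N}^1 H^n(X)$, hence (P2).

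For the case $n=2$ with $F^1$ in place of $N^1$: a direct mixed-Hodge-theoretic computation yields $H^2(V)/F^1 H^2(V)\cong H^{0,2}(X) = H^2(X,\OO_X)$, since $\mathrm{Gr}^W_3 H^2(V)$ and $\mathrm{Gr}^W_4 H^2(V)$ both lie in $F^1$. Thus (P2$^{\prime\prime}$) reduces to surjectivity of $H^{0,1}(X)\cup H^{0,1}(X)\to H^{0,2}(X)$, equivalently (by complex conjugation) of $H^{1,0}(X)\cup H^{1,0}(X)\to H^{2,0}(X)$. The image of $H^1(X)\cup H^1(X)$ in $H^2(X)/\wt{N}^1 = H^2(X)/\mathrm{NS}(X)_\QQ$ is then a rational sub-Hodge structure whose complexification contains $H^{2,0}$; by the Lefschetz $(1,1)$ theorem, the transcendental part $H^2(X)/\mathrm{NS}(X)_\QQ$ is the smallest such rational sub-Hodge structure, so the image equals everything, giving (P2). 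The main obstacle is the absorption step in the previous paragraph: making precise the identification of the $e_{i,j}$ as boundary Gysin classes and verifying that their cup product contributions fall in $N^1 H^n(V)$ requires a careful analysis of the interaction of the weight filtration with the coniveau filtration on an open smooth variety, and this is where the low-dimensional hypothesis $n\le 3$ is used essentially.
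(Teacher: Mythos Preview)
Your overall plan matches the paper's: reduce (P2$^\prime$) on $V$ to the coniveau version of (P2) on $X$, then invoke $\wt N^1 H^nX=N^1H^nX$ for $n\le 3$. Your treatment of $\wt N^1=N^1$ on $H^3$ and the entire $n=2$/(P2$^{\prime\prime}$) argument are correct and in fact more detailed than the paper, which simply cites Vial and asserts ``$\wt N^1H^2X=N^1H^2X=F^1H^2X$'' (your Hodge--theoretic reduction via $H^2(V)/F^1\cong H^{0,2}(X)$ and the minimality of the transcendental lattice is the honest argument here).

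There is, however, a genuine gap in your passage from (P2$^\prime$) on $V$ to the coniveau statement on $X$. Two problems: first, the decomposition $\beta_{i,j}=\tilde\beta_{i,j}\vert_V+e_{i,j}$ presupposes a splitting of the weight filtration over $\QQ$, which need not exist. Second, and more seriously, the cross terms do \emph{not} lie in $N^1H^n(V)$ in general. For instance, take $X=\PP^1\times E$ with $E$ an elliptic curve and $V=\mathbb G_m\times E$; writing $e\in H^1(\mathbb G_m)=\QQ(-1)$ and $\tilde\beta\in H^1(E)$, the cross term $e\cup\tilde\beta$ lies in the weight--$3$ piece $\QQ(-1)\otimes H^1(E)\subset H^2(V)$, which has Hodge types $(2,1)$ and $(1,2)$, whereas every curve class in $H^2(V)$ is of type $(1,1)$. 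So $e\cup\tilde\beta\notin N^1H^2(V)$.

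The argument the paper has in mind (its ``easily seen'') is a strictness argument and requires no splitting. Cup product, restriction $H^\ast(X)\to H^\ast(V)$, and the inclusion $N^1H^n(V)\hookrightarrow H^n(V)$ are all morphisms of mixed Hodge structures; apply the exact functor $\mathrm{Gr}^W_n$ to the surjection (P2$^\prime$). Since the weights on $H^{n_i}(V)$ are $\ge n_i$ and $\sum n_i=n$, one gets
\[
\mathrm{Gr}^W_n\Bigl(\bigotimes_i H^{n_i}(V)\Bigr)=\bigotimes_i W_{n_i}H^{n_i}(V)=\bigotimes_i \ima\bigl(H^{n_i}(X)\to H^{n_i}(V)\bigr),
\]
while $\mathrm{Gr}^W_nH^n(V)=\ima\bigl(H^n(X)\to H^n(V)\bigr)$ and $\mathrm{Gr}^W_n\bigl(N^1H^n(V)\bigr)=W_n\cap N^1H^n(V)$, which pulls back to $N^1H^n(X)$. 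This yields the surjection $\bigotimes_i H^{n_i}(X)\to H^n(X)/N^1$ with no case analysis. In particular this step is \emph{dimension--free}; the hypothesis $n\le 3$ is used only to identify $N^1H^nX$ with $\wt N^1H^nX$, contrary to your final sentence.
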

  
  \begin{proof}(of corollary \ref{cor}) It is easily seen that (P2$^\prime$) implies surjectivity of
  \[ H^{n_1}(X)\otimes\cdots\otimes H^{n_r}(X)\ \to\ H^n(X)/{N}^1\ .\]
 Suppose now $n=3$. Since
    \[ N^1H^3X=\wt{N}^1H^3X\]
    \cite[page 415 "Properties"]{V4}, the above is equivalent to condition (P2) of theorem \ref{main}.
    The case $n=2$ is similar, using that
    \[ \wt{N}^1 H^2X=N^1 H^2X=F^1 H^2X\ .\]
   \end{proof}

\begin{proof}(of theorem \ref{main}) 
%The implication from (P1) to (P1$^\prime$) can be found in \cite{ESV} (or above, in the proof of theorem \ref{}). Obviously, (P2$^\prime$) is weaker than (P1$^\prime$). The implication from (P2$^\prime$) to (P2) is similar to the proof of theorem \ref{}: it suffices to note that the summands $\Gamma_1$ and $\Gamma_2$ do not act on $\gr^0_{\wt{N}} H^n(X)$ (for $\Gamma_2$ this is trivial, for $\Gamma_1$ it follows from the fact that the action factors over $\gr^0_{\wt{N}} H^n(\wt{D})$ which is $0$).

%The implication (P1$^\prime$) $\Rightarrow$ (P1) can be readily seen:   NOOO !!!!
Since $X$ satisfies $B(X)$, the K\"unneth components $\pi_i$ are algebraic \cite{K0}, \cite{K}.
Because the dimension is at most $5$, the variety $X$ satisfies conditions (*) and (**) of Vial's \cite{V4}. This implies the existence of a refined Chow--K\"unneth decomposition of the diagonal
  \[ \Delta =\sum_{i,j} \Pi_{i,j}\ \ \in A^n(X\times X)\ \]
  (loc. cit., Theorems 1 and 2).  Here the $\Pi_{i,j}$ are mutually orthogonal idempotents, which act on homology as projectors
  \[  (\Pi_{i,j})_\ast\colon\ \ H_\ast(X)\ \to\ \gr^j_{\wt{N}} H_i(X)\to H_\ast(X)\ .\]
  (Note that $\gr^j_{\wt{N}} H_i(X)$ is a priori not a subspace of $H_i(X)$; however, over $\C$ the existence of a polarization gives a canonical identification with a subspace of $H_i(X)$ \cite{V4}.)
  Assumption (P2) translates into the fact that the morphism of homological motives
    \[ f=\Pi_{n,0}\circ\Delta^r\circ(\pi_{n_1}\times\cdots\times\pi_{n_r})\colon\ \ (X,\pi_{n_1})\otimes\cdots\otimes(X,\pi_{n_r})\ \to\ (X,\Pi_{n,0})\ \ \in {\mathcal M}_{hom}\]
  is surjective, where $\Delta^r$
    is the class in $A^{nr}(X^{r+1})$ of the ``small diagonal'' 
  \[ \Delta^r:=\{(x,x,\ldots,x), x\in X\}\ \ \subset X^{r+1}\ .\]
 
To see this,      
 note that since $B(X)$ holds and we are in characteristic $0$, homological and numerical equivalence coincide on $X$ and its powers \cite{K0}, \cite{K}.
  Thus, using Jannsen's semisimplicity result \cite{J1}, the motives $(X,\pi_i)$ and $(X,\Pi_{n,0})$ are contained in a full semisimple abelian subcategory
  ${\mathcal M}_0$ of the category ${\mathcal M}_{hom}$ of motives with respect to homological equivalence (for ${\mathcal M}_0\subset{\mathcal M}_{hom}$ one can take the subcategory generated by varieties that are known to satisfy the Lefschetz standard conjecture). Hence, we get a decomposition
     \[  (X,\Pi_{n,0})= \ima f\oplus M^\prime\ \ \in {\mathcal M}_0\subset{\mathcal M}_{hom}\ .\]
     But assumption (P2) gives that
     \[ H^\ast(M^\prime)=0\ ,\]
     so that $M^\prime=0\in{\mathcal M}_{hom}$.
     
By semisimplicity of ${\mathcal M}_0$, the surjection
       \[ f=\Pi_{n,0}\circ\Delta^r\circ(\pi_{n_1}\times\cdots\times\pi_{n_r})       \colon\ \ (X,\pi_{n_1})\otimes\cdots\otimes(X,\pi_{n_r})\ \to\ (X,\Pi_{n,0})\ \ \in {\mathcal M}_{hom}\]
 is a split surjection. That is, there exists a correspondence $C\in A^{n}(X\times X^r)$ such that
   \[    \Pi_{n,0}= \Pi_{n,0}\circ\Delta^r\circ (\pi_{n_1}\times\cdots\times\pi_{n_r})\circ C \ \ \in H^{2n}(X\times X)\ .\] 
 For brevity, we will henceforth write 
   \[   \Delta_{(n_1,\ldots,n_r)}:=  \Pi_{n,0}\circ\Delta^r\circ (\pi_{n_1}\times\cdots\times\pi_{n_r})\circ C \ \ \in A^n(X\times X)\ ,\]
 where we suppose we have made the following choice for the K\"unneth components $\pi_{n_i}$ modulo rational equivalence:
   \[  \pi_{n_i}=\sum_j \Pi_{n_i,j}\ \ \in A^n(X\times X)\ .\]  
        
 Since by construction \cite[Theorems 1 and 2]{V4}, all the $\Pi_{i,j}$ for $(i,j)\not=(n,0)$ are supported on $(D\times X)   \cup (X\times D)$, for some divisor $D\subset X$, we get an equality modulo homological equivalence
   \[ \Delta=\Pi_{n,0}+\sum_{(i,j)\not=(n,0)} \Pi_{i,j}= \Delta_{(n_1,\ldots,n_r)}+\Gamma_1+\Gamma_2\ \ \in H^{2n}(X\times X)\ ,\]     
with $\Gamma_1,\Gamma_2$  supported on $D\times X$ (resp. on $X\times D$).  

Using one of the two nilpotence theorems (theorem \ref{nilp} in case $X$ has finite--dimensional motive, theorem \ref{VV} in case the Griffiths group vanishes), it follows there exists $N\in\NN$ such that
  \[   \Bigl(\Delta- \Delta_{(n_1,\ldots,n_r)}-\Gamma_1-\Gamma_2\Bigr)^{\circ N}=0\ \ \in A^{n}(X\times X)\ .\]  
 Developing this expression gives
  \[  \Delta=\sum_k Q_k\ \ \in A^n(X\times X)\ ,\]
  where each $Q_k$ is a composition of $\Delta_{(n_1,\ldots,n_r)}$ and $\Gamma_1$ and $\Gamma_2$:
  \[   Q_k=Q_k^0\circ Q_k^1\circ\cdots\circ Q_k^{N^\prime}\ \ \in A^n(X\times X)\ ,\]
 for $Q_k^i\in\{ \Delta_{(n_1,\ldots,n_r)}, \Gamma_1,\Gamma_2\}$.
       
  For reasons of dimension, $Q_k$ does not act on $A^nX$ as soon as $Q_k$ contains at least one copy of $\Gamma_1$. It follows that
   \begin{equation}\label{one}  A^nX=\Delta_\ast A^nX= (\Delta_{(n_1,\ldots,n_r)}\circ(\hbox{something}))_\ast A^nX+  (\Gamma_2\circ(\hbox{something}))_\ast A^nX\ .\end{equation}
   
  It is convenient to rewrite this as follows: define
    \[  \Delta^\prime_{(n_1,\ldots,n_r)}:=\Delta^r\circ (\pi_{n_1}\times\cdots\times\pi_{n_r})\circ C \ \ \in A^n(X\times X) \ ,\]
    so that 
    \[   \Delta_{(n_1,\ldots,n_r)}=\Pi_{n,0}\circ \Delta^\prime_{(n_1,\ldots,n_r)}\ \ \in A^n(X\times X)\ .\]   
  Since all the $\Pi_{i,j}$ with $(i,j)\not=(n,0)$ are supported on $(D\times X)\cup (X\times D)$ \cite[Theorems 1 and 2]{V4}, we have an equality
   \[   \Delta_{(n_1,\ldots,n_r)}=(\Delta-\sum_{(i,j)\not=(n,0)}\Pi_{i,j})\circ \Delta^\prime_{(n_1,\ldots,n_r)}=\Delta^\prime_{(n_1,\ldots,n_r)}+\Gamma_1+\Gamma_2\ \ \in A^n(X\times X)\ ,\]
   with $\Gamma_1, \Gamma_2$ supported on $D\times X$ resp. on $X\times D$.
  Now, equation \eqref{one} can be rewritten as 
   \[A^nX= (\Delta^\prime_{(n_1,\ldots,n_r)}\circ(\hbox{something}))_\ast A^nX+  (\Gamma_2\circ(\hbox{something}))_\ast A^nX\ . \]

 The first term decomposes (lemma \ref{lemma} below), and the second term is supported on $D$; this proves the theorem with $V=X\setminus D$.
 
 \begin{lemma}\label{lemma} Set--up as above. Then
   \[   (\Delta^\prime_{(n_1,\ldots,n_r)})_\ast A^nX\subset \ima \bigl( A^{n_1}X\otimes \cdots \otimes A^{n_r}X\ \xrightarrow{\iota}\ A^nX\bigr)\ ,\]
   where $\iota$ denotes intersection product.
 \end{lemma}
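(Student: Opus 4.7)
The plan is to unfold the correspondence $\Delta'_{(n_1,\ldots,n_r)}$ and express its action on a zero-cycle as an intersection product of classes in the prescribed codimensions.

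First I would directly compute the composition $\Delta^r\circ(\pi_{n_1}\times\cdots\times\pi_{n_r})\in A^{nr}(X^r\times X)$ using the correspondence composition formula: viewing $\Delta^r$ as the graph of the diagonal map $\delta:X\to X^r$, a short diagram chase yields the identity
\[
\Delta^r\circ(\pi_{n_1}\times\cdots\times\pi_{n_r})\;=\;\prod_{j=1}^r q_{j,r+1}^*(\pi_{n_j})\;\in\; A^{nr}(X^r\times X),
\]
where $q_{j,r+1}:X^r\times X\to X\times X$ projects onto the $j$-th and last factors. Combined with base change along the Cartesian square associated to $\delta:X\hookrightarrow X^r$, this reinterprets the action as
\[
\bigl(\Delta^r\circ(\pi_{n_1}\times\cdots\times\pi_{n_r})\bigr)_* u \;=\; \delta^*\bigl((\pi_{n_1}\times\cdots\times\pi_{n_r})_* u\bigr)
\]
for every $u\in A^n(X^r)$.

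Second, by linearity in $z_0$ it suffices to handle the case $z_0=[p]$ a point, in which case $C_* z_0=C_p:=C|_{\{p\}\times X^r}\in A^n(X^r)$, so the claim reduces to showing
\[
\delta^*\bigl((\pi_{n_1}\times\cdots\times\pi_{n_r})_* C_p\bigr)\;\in\;\ima\bigl(A^{n_1}X\otimes\cdots\otimes A^{n_r}X\to A^nX\bigr).
\]

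Third---the crux---I would exploit the specific choice $\pi_{n_j}=\sum_k\Pi_{n_j,k}$ from Vial's refined Chow--K\"unneth decomposition. By the construction of \cite{V4}, each refined projector $\Pi_{n_j,k}\in A^n(X\times X)$ factors through a smooth projective variety of dimension $n_j-2k$, with explicit inclusions into $X$ controlling its support. Substituting this expansion and distributing over the external product, every summand of $(\pi_{n_1}\times\cdots\times\pi_{n_r})_* C_p$ is supported on a product of such lower-dimensional subvarieties of $X^r$; applying $\delta^*$, which turns restriction to the diagonal into ordinary intersection on $X$, produces a sum of intersections $\alpha_1\cdots\alpha_r$ on $X$, with the $j$-th factor $\alpha_j$ sitting in $A^{n_j}X$ by construction.

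The main obstacle is this codimension bookkeeping in the third step: one must check that the factors $\alpha_j$ really land in $A^{n_j}X$ rather than in some other compatible codimension summing to $n$. This requires carefully tracking dimensions through Vial's explicit factorization of $\Pi_{n_j,k}$, and is where the hypothesis $n\le 5$ is implicitly at work, via its role in guaranteeing that the refined Chow--K\"unneth decomposition exists with the required support on products of subvarieties of $X$.
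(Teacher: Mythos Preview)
Your outline is on the right track, but the third step contains a genuine gap. You assert that because each summand of $(\pi_{n_1}\times\cdots\times\pi_{n_r})_\ast C_p$ is supported on a product $W_1\times\cdots\times W_r\subset X^r$, pulling back along the diagonal produces a sum of products $\alpha_1\cdots\alpha_r$ with $\alpha_j\in A^{n_j}X$. But support on a product does \emph{not} by itself yield such a decomposition: the K\"unneth formula fails for Chow groups, so a class in $A^\ast(W_1\times\cdots\times W_r)$ need not be a sum of exterior products of classes on the factors. Your ``codimension bookkeeping'' obstacle is therefore not merely bookkeeping; as stated, the argument does not go through.

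The paper closes this gap with a single clean observation that you are missing. One uses only that each $\pi_{n_i}$ (not the individual refined pieces $\Pi_{n_i,k}$) is supported on $Y_{n_i}\times X$ with $\dim Y_{n_i}=n_i$; this holds under $B(X)$ already, e.g.\ with $Y_{n_i}$ a smooth linear section. The action of $\pi_{n_1}\times\cdots\times\pi_{n_r}$ on $A^n(X^r)$ then factors through $A^n(Y_{n_1}\times\cdots\times Y_{n_r})$. Since $\sum n_i=n$ equals the dimension of this product, that group is $A_0(Y_{n_1}\times\cdots\times Y_{n_r})$, and zero--cycles on a product are \emph{trivially} generated by products of points. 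Hence the exterior product map
\[
A^{n_1}(Y_{n_1})\otimes\cdots\otimes A^{n_r}(Y_{n_r})\ \longrightarrow\ A^n(Y_{n_1}\times\cdots\times Y_{n_r})
\]
is surjective, and pushing forward via the $\pi_{n_i}^\prime$ and then applying $(\Delta^r)_\ast$ (which is your $\delta^\ast$) lands in the image of the intersection product $A^{n_1}X\otimes\cdots\otimes A^{n_r}X\to A^nX$. This dissolves the obstacle entirely: there is no codimension bookkeeping left to do. Note also that expanding into the refined projectors $\Pi_{n_j,k}$ is unnecessary and muddies the water, and that the hypothesis $n\le 5$ plays no role in this lemma---it is used elsewhere in the theorem to obtain $\Pi_{n,0}$, not here.
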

 
 \begin{proof} Recall that by definition,
   \[ \Delta^\prime_{(n_1,\ldots,n_r)}=  \Delta^r\circ (\pi_{n_1}\times\cdots\times\pi_{n_r})\circ C\ \ \in A^n(X\times X)\ .\] 
    
   The point is that the $\pi_{n_i}$ are supported on $Y_{n_i}\times X$, for some $Y_{n_i}\subset X$ of dimension $n_i$, i.e. there exist correspondences $\pi_{n_i}^\prime\in A_n(Y_{n_i}\times X)$ pushing forward to $\pi_{n_i}$ \cite[Theorems 1 and 2]{V4} (actually, this can even be achieved with $Y_{n_i}$ a smooth hyperplane section of dimension $n_i$ \cite[Theorem 7.7.4]{KMP}). The action of the correspondence $\Delta^\prime_{(n_1,\ldots,n_r)}$
   on $0$--cycles thus factors as
     \[  \begin{array}[c]{cccccccc}    & A^nX&&&      \xrightarrow{(  \Delta^\prime_{(n_1,\ldots,n_r)})_\ast}     &&&A^nX\\
                        &\downarrow{=} &&& &&&\uparrow{=}\\
         &A^nX&\xrightarrow{C_\ast}& A^n(X^r)&\xrightarrow{(\pi_{n_1}\times\cdots\times\pi_{n_r})_\ast}&
      A^n(X^r)& \xrightarrow{(\Delta^r)_\ast}  & A^nX\\
      &&&\downarrow&&\uparrow{=}&&\uparrow{=}\\   
       &&&  A^n(Y_{n_1}\times\cdots\times Y_{n_r}) &\xrightarrow{(\pi_{n_1}^\prime\times\cdots\times\pi_{n_r}^\prime)_\ast}& A^n(X^r)&\xrightarrow{(\Delta^r)_\ast}&A^nX\\
       &&& \uparrow{\times_r}&&\uparrow&&\uparrow{=}\\
      &&& A^{n_1}(Y_{n_1})\otimes\cdots\otimes A^{n_r}(Y_{n_r})&\xrightarrow{(\pi_{n_1}^\prime)_\ast\otimes\cdots\otimes (\pi_{n_r}^\prime)_\ast}& A^{n_1}(X)\otimes\cdots\otimes A^{n_r}(X)   &\xrightarrow{\iota}& A^nX\\
         \end{array}\]
 The arrow labelled $\times_r$ is surjective (because it is a map on $0$--cycles); since the diagram commutes this implies the lemma.  
     \end{proof}
    \end{proof} 
     
% \begin{remark} Concerning the hypothesis on the dimension $n\le 5$ in theorem \ref{main}, the same argument works for $n$ arbitrary, provided the 
 %Voisin standard conjecture holds. (probably ??!!) 
  %\end{remark}    

 \begin{remark} Here are some non--trivial cases where property (P1) is known to hold: abelian varieties (with $V=X$ and all the $n_j=1$, cf. \cite{Bl}); the variety of lines of a cubic threefold (with $V=X$ and $n_1=n_2=1$, cf. \cite[Example 1.7]{B}); the variety of lines of a cubic fourfold (with $V=X$ and $n_1=n_2=2$, cf. \cite[Theorem 20.2]{SV}). Note that in the last two cases, finite--dimensionality of the motive is not known, so (P1) can {\em not\/} be deduced from theorem \ref{main}; one needs some geometric arguments to establish (P1).
 \end{remark}

  \begin{remark}
  The assumption ``$\grif^n(X\times X)_{}=0$'' in theorem \ref{main} is mainly of theoretical interest, and not practically useful. Indeed, there are precise conjectures (based on the Bloch--Beilinson conjectures) saying how the coniveau filtration on cohomology should influence Griffiths groups \cite{J3}.  
 For $n=2$, it is conjectured that if $H^1(X)=0$ then $\grif^2(X\times X)_{}=0$. For $n=3$, it is conjectured that if $h^{0,2}(X)=h^{0,3}(X)=0$ then $\grif^3(X\times X)_{}=0$. For $n=4$, if $h^{2,0}(X)=h^{3,0}(X)=h^{4,0}(X)=h^{2,1}(X)=0$ then $\grif^4(X\times X)_{}$ should vanish. These predictions are particular instances of \cite[Corollary 6.8]{J3}.
  
  Unfortunately, it seems these conjectures are not known in any non--trivial cases (i.e., outside of the range of varieties with Abel--Jacobi trivial Chow groups); it would be very interesting to find (non--trivial) examples where they can be proven ! 
\end{remark}

  \begin{remark} The Chow motive $(X,\Pi_{n,0})$ (which for varieties verifying (*) and (**) of \cite{V4} is unique up to isomorphism) can be considered the ``most transcendental part'' of the motive $(X,\Delta)$. When $X$ is a surface, $(X,\Pi_{2,0})$ is the transcendental part denoted $t_2(X)$ (and studied in detail) in \cite{KMP}.
  
  Actually, following \cite{KMP} one might hope that $\Pi_{n,0}$ can be linked with the theory of birational motives of Kahn--Sujatha \cite{KS}; this would perhaps give a more conceptual proof (or an extension ?) of theorem \ref{main}. I have not looked into this yet.
  \end{remark}
  
 \begin{remark} The argument of theorem \ref{main} also shows the following: suppose the standard conjecture of Lefschetz type holds universally. Then for any variety with finite--dimensional motive, (P2) implies (P1).
 \end{remark}

\begin{remark} Suppose $X$ satisfies the hypotheses of theorem \ref{main}, so that
  \[  A^{n_1}V_{}\otimes A^{n_2}V_{}\otimes\cdots\otimes A^{n_r}V_{}\ \to\ A^nV\]
  is surjective, for some open $V\subset X$.
  It seems interesting to ask how many simple tensors are needed to generate $A^nV$. For a given $n_1,\ldots,n_r$, let's say $a\in A^nV$ is {\sl $k$--decomposable\/} if there is an expression
    \[ a=\sum_{j=1}^k a_j^1\cdot a_j^2\cdots a_j^r\ \ \in A^nV\ ,\]
    with $a_j^i\in A^{n_i}V$. This is related to unpublished work of Nori, discussed in \cite[remark 5]{ESV}. According to loc. cit., Nori proves that for any $X$ with $H^n(X,\OO_X)\not=0$ and any $k$, there exist elements in $A^nX$ that are not $k$--decomposable (with respect to any $(n_1,\ldots,n_r)$ with $r>1$).\footnote{To be precise, Nori's result is more general, as the notion of $k$--decomposability in \cite{ESV} is broader than the notion discussed here: in loc. cit., an element $a\in A^nV$ is defined to be $k$--decomposable if it can be written as $a=\sum_{j=1}^k a_j\cdot b_j$ with $a_j\cdot b_j$ homogeneous of degree $n$.} It seems likely the same is true for $A^nV$.
    
 The only thing I am able to prove is the following: for $X$ satisfying the hypotheses of theorem \ref{main}, there is an open $V\subset X$ such that each point $v\in V$ is $1$--decomposable in $A^nV$. (To see this, one uses (P1) to obtain a Bloch--Srinivas style decomposition of the diagonal
   \[ \Delta= C_1\cdot C_2\cdots C_r+\Gamma_1+\Gamma_2\ \ \in A^n(X\times X)\ ,\]
   with $C_i\in A^{n_i}(X\times X)$ and $\Gamma_j$ as before; this is done in \cite{ESV} using the method of \cite{BS}. Given a point $v\in V$, let $\tau_v$ denote the inclusion $v\times X\hookrightarrow X\times X$, and let $C_i^v$ denote the restriction $C_i^v=(\tau_v)^\ast(C_i)\in A^{n_i}(v\times X)$.
  Now, note that   
   \[  \begin{split}v=\Delta_\ast v=(C_1\cdots C_r)_\ast v&= (p_2)_\ast\bigl( (v\times X)\cdot C_1\cdots C_r\bigr)\\
                                                                                         &=(p_2)_\ast\bigl( (\tau_v)_\ast (C_1^v\cdots C_r^v)\bigr)\\
                                                                                             & =C_1^v\cdots C_r^v\ \ \in A^nV\ .)
                                                                                             \end{split}\] 
                For general $0$--cycles on the other hand, even when (as in the case of theorem \ref{main}) all $0$--cycles are $k$--decomposable for some $k$, it seems unlikely $k$ can be bounded.                                                                             
 \end{remark}

\begin{acknowledgements}
This note was stimulated by the Strasbourg 2014---2015 ``groupe de travail'' based on the monograph \cite{Vo}. I want to thank all the participants of this groupe de travail for the very pleasant and stimulating atmosphere, and their interesting lectures. Thanks to Charles Vial and the referee for helpful comments. Many thanks to Yasuyo, Kai and Len for providing excellent working conditions at home in Schiltigheim.
\end{acknowledgements}

% BibTeX users please use one of
%\bibliographystyle{spbasic}      % basic style, author-year citations
%\bibliographystyle{spmpsci}      % mathematics and physical sciences
%\bibliographystyle{spphys}       % APS-like style for physics
%\bibliography{}   % name your BibTeX data basez

\begin{thebibliography}{}

\bibitem{An} Y. Andr\'e, Motifs de dimension finie (d'apr\`es S.-I. Kimura, P. O'Sullivan...), S\'eminaire Bourbaki. Vol. 2003/2004. Ast\'erisque 299 Exp. No. 929, viii, 115---145,

%\bibitem{A2} D. Arapura, The Leray spectral sequence is motivic, Invent. math. 160 (2005), 567---589, 

\bibitem{A} D. Arapura, Varieties with very little transcendental cohomology, in: Motives and algebraic cycles (R. de Jeu and J. Lewis, eds.), Fields Institute Communications, Amer. Math. Soc., Providence 2009,

%\bibitem{A2} D. Arapura, Motivation for Hodge cycles, Advances in Math. vol. 207 (2006), 

%\bibitem{AK} D. Arapura and S.-J. Kang, Functoriality of the coniveau filtration, Canad. Math. Bull. 50 (2) (2007), 161---171,

\bibitem{Ay} J. Ayoub, Motives and algebraic cycles: a selection of conjectures and open questions, preprint,

\bibitem{Bl} S. Bloch, Some elementary theorems about algebraic cycles on abelian varieties, Invent. Math. 37 (1976), 215---228,

\bibitem{B0} S. Bloch, On an argument of Mumford in the theory of algebraic cycles, in: G\'eom\'etrie alg\'ebrique (A. Beauville, ed.), Sijthoff and Noordhoff, Angers 1979,

\bibitem{B} S. Bloch, Lectures on algebraic cycles, Duke Univ. Press Durham 1980,

%\bibitem{B2} S. Bloch, Algebraic cycles and higher K--theory, Advances in Math. vol. 61 (1986), 267---304,

%\bibitem{B3} S. Bloch, The moving lemma for higher Chow groups, J. Alg. Geom. 3 (1994), 537---568,

\bibitem{BO} S. Bloch and A. Ogus, Gersten's conjecture and the homology of schemes, Ann. Sci. Ecole Norm. Sup. 4 (1974), 181---202,

\bibitem{BS} S. Bloch and V. Srinivas, Remarks on correspondences and algebraic cycles, American Journal of Mathematics Vol. 105, No 5 (1983), 1235---1253,

%\bibitem{CM} A. de Cataldo and L. Migliorini, The Hodge theory of algebraic maps, Ann. Scient. Ec. Norm. Sup., 4e s\'erie, t. 38 (2005), 693---750,

%\bibitem{D} A. Durfee, Intersection homology Betti numbers, Proc. Amer. Math. Soc. 123 (1995), 989---993,

%\bibitem{EL} H. Esnault and M. Levine, Surjectivity of cycle maps, in: Journ\'ees de G\'eom\'etrie alg\'ebrique d'Orsay, Ast\'erisque 218 (1993),

%\bibitem{ELV} H. Esnault, M. Levine and E. Viehweg, Chow groups of projective varieties of very small degree, Duke Math. J. 87 (1997), 29---58,

\bibitem{ESV} H. Esnault, V. Srinivas and E. Viehweg, Decomposability of Chow groups implies decomposability of cohomology, in :``Journ\'ees de G\'eom\'etrie Alg\'ebrique d'Orsay, Juillet 1992'', Ast\'erisque 218 (1993), 227---242, 

%\bibitem{F} W. Fulton, Intersection theory, Springer--Verlag Berlin Heidelberg New York 1984,

%\bibitem{Fu} B. Fu, Remarks on hard Lefschetz conjectures on Chow groups, Science China Mathematics Vol. 53 No 1 (2010), 105---114,

%\bibitem{FMSS} W. Fulton, R. MacPherson, F. Sottile and B. Sturmfels, Intersection theory on spherical varieties, J. Alg. Geom. 4 (1993), 181---193,

%\bibitem{G} H. Gillet, Riemann--Roch theorems for higher algebraic K--theory, Advances in Math. 40 (1981), 203---289,

%\bibitem{GS} H. Gillet and C. Soul\'e, Descent, motives and K--theory, J. Reine Angew. Math. 478 (1996), 127---176,

%\bibitem{GM} M. Goresky and R. MacPherson, Intersection homology theory, Topology 19 (1980), 135---162,

%\bibitem{GM2} M. Goresky and R. MacPherson, Intersection homology II, Inv. Math. 71 (1983), 77---129,

\bibitem{GP} V. Guletski\u{\i} and C. Pedrini, The Chow motive of the Godeaux surface, in:
Algebraic Geometry, a volume in memory of Paolo Francia (M.C. Beltrametti,
F. Catanese, C. Ciliberto, A. Lanteri and C. Pedrini, editors),
Walter de Gruyter, Berlin New York, 2002,

\bibitem{GP2} V. Guletski\u{\i} and C. Pedrini, Finite--dimensional motives and the conjectures of Beilinson and Murre, K--Theory 30(3) (2003), 243---263,

%\bibitem{HS} M. Hanamura and M. Saito, Weight filtrations on the cohomology of algebraic varieties, arxiv 0605603v2,

%\bibitem{H} R. Hartshorne, Equivalence relations on algebraic cycles and subvarieties of small codimension, in: Algebraic geometry, Arcata 1974, Proc. Symp. Pure Math. Vol. 29, Amer. Math. Soc., Providence 1975,


%\bibitem{HI} A. Hirschowitz and J. Iyer, Hilbert schemes of fat $r$--planes and the triviality of Chow groups of complete intersections, in: Vector bundles and complex geometry, Contemp. Math. 522, Amer. Math. Soc., Providence 2010,


\bibitem{Iv} F. Ivorra, Finite dimensional motives and applications (following S.-I. Kimura, P. O'Sullivan and others), in:
        Autour des motifs, Asian-French summer school on algebraic geometry and number theory,
       Volume III, Panoramas et synth\`eses, Soci\'et\'e math\'ematique de France 2011,

\bibitem{I} J. Iyer, Murre's conjectures and explicit Chow--K\"unneth projectors for varieties with a nef tangent bundle, Transactions of the Amer. Math. Soc. 361 (2008), 1667---1681,

\bibitem{I2} J. Iyer, Absolute Chow--K\"unneth decomposition for rational homogeneous bundles and for log homogeneous varieties, Michigan Math. Journal
 Vol.60, 1 (2011), 79---91,
 
%\bibitem{J} U. Jannsen, Mixed motives and algebraic K--theory, Springer Lecture Notes in Mathematics 1400 (1990),

\bibitem{J1} U. Jannsen, 
Motives, numerical equivalence, and semi-simplicity, Invent. Math. 107(3) (1992), 447---452, 

%\bibitem{J2} U. Jannsen, Motivic sheaves and filtrations on Chow groups, in: Motives (U. Jannsen et alii, eds.), Proceedings of Symposia in Pure Mathematics Vol. 55 (1994), Part 1,  

\bibitem{J3} U. Jannsen, Equivalence relations on algebraic cycles, in: The arithmetic and geometry
of algebraic cycles (B. Gordon et alii, eds.), Banff Conf. 1998, Kluwer,

\bibitem{J4} U. Jannsen, On finite--dimensional motives and Murre's conjecture, in: Algebraic cycles and motives, volume 344 of London Math. Soc. Lecture Note Ser., Cambridge Univ. Press, Cambridge 2007,

\bibitem{Kah} B. Kahn, Equivalences rationnelle et num\'erique sur certaines vari\'et\'es de type ab\'elien sur un corps fini, Ann. Sci. Ec. Norm. Sup. 36 (2003), 977---1002,

\bibitem{KMP} B. Kahn, J. P. Murre and C. Pedrini, On the transcendental part of the motive of a surface, in: Algebraic cycles and motives. Vol. 2, volume 344 of London Math. Soc. Lecture Note Ser., 143---202, Cambridge Univ. Press, Cambridge 2007,


\bibitem{KS} B. Kahn and R. Sujatha, Birational motives, I: pure birational motives, arXiv:0902.4902v4,

%\bibitem{Kim} S. Kimura, Fractional intersection and bivariant theory, Comm. in Algebra 20 (1992), 285---302, 

\bibitem{Kim} S. Kimura, Chow groups are finite dimensional, in some sense,
Math. Ann. 331 (2005), 173---201,

%\bibitem{Kim2} S. Kimura, Surjectivity of the cycle map for Chow motives, in: Motives and algebraic cycles, Fields Institute Comm. vol. 56, Amer. Math. Soc., Providence 2009,

\bibitem{K0} S. Kleiman, Algebraic cycles and the Weil conjectures, in: Dix expos\'es sur la cohomologie des sch\'emas, North--Holland Amsterdam, 1968, 359---386, 

\bibitem{K} S. Kleiman, The standard conjectures, in: Motives (U. Jannsen et alii, eds.), Proceedings of Symposia in Pure Mathematics Vol. 55 (1994), Part 1, 

%\bibitem{yet} R. Laterveer, Yet another version of Mumford's theorem, Archiv Math. 2015, ,

\bibitem{Lat} R. Laterveer, A brief note concerning hard Lefschetz for Chow groups, to appear in Canadian Math. Bulletin, http://dx.doi.org/10.4153/CMB-2015-046-x,

%\bibitem{moi} R. Laterveer, Algebraic varieties with small Chow groups, J. Math. Kyoto Univ. Vol. 38 No 4 (1998), 673---694,

%\bibitem{Lev} M. Levine, Techniques of localization in the theory of algebraic cycles, J. Alg. Geom. 10
%(2001), 299---363,

%\bibitem{L} J. Lewis, Towards a generalization of Mumford's theorem, J. Math. Kyoto Univ. 29 (1989), 195---204,

%\bibitem{L2} J. Lewis, A generalization of Mumford's theorem, II, Illinois Journal of Mathematics, Vol. 39 No 2 (1995), 288---304,

%\bibitem{LH} J. Lewis, A survey of the Hodge conjecture, Second edition, CRM Monograph series, Amer. Math. Soc., Providence 1999,


\bibitem{M} D. Mumford, Rational equivalence of $0$--cycles on surfaces, J. Math. Kyoto Univ. Vol. 9 No 2 (1969), 195---204,

\bibitem{Mu} J. Murre, On a conjectural filtration on the Chow groups of an algebraic variety, Indag. Math. (N.S.) 4 (1993), 177---201,

\bibitem{MNP} J. Murre, J. Nagel and C. Peters, Lectures on the theory of pure motives, Amer. Math. Soc. University Lecture Series 61, 2013,

\bibitem{P} C. Pedrini, On the finite dimensionality of a $K3$ surface, Manuscripta Mathematica 138 (2012), 59---72,

%\bibitem{PS} C. Peters and J. Steenbrink, Mixed Hodge structures, Springer Verlag,

%\bibitem{Q} D. Quillen, Higher algebraic K--theory: I, in: Algebraic K-theory, I: Higher K-theories (Proc. Conf., Battelle Memorial Inst., Seattle, Wash., 1972), Lecture Notes in Math 341, Springer-Verlag Berlin New York,

%\bibitem{R} A.A. Rojtman, The torsion of the group of 0--cycles modulo rational equivalence, Annals of Mathematics 111 (1980), 553---569,

%\bibitem{S} C. Schoen, On Hodge structures and non--representability of Chow groups, Comp. Math. 88 (1993), 285---316,

\bibitem{SV} M. Shen and C. Vial, The Fourier transform for certain hyperK\"ahler fourfolds, Memoirs of the AMS 240 (2016), no.1139,

\bibitem{Tan} S. Tankeev, On the standard conjecture of Lefschetz type for complex projective threefolds. II, Izvestiya Math. 75:5 (2011), 1047---1062,

%\bibitem{T} B. Totaro, Chow groups, Chow cohomology, and linear varieties, Forum of Mathematics, Sigma (2014), vol. 1, e1,

\bibitem{V} C. Vial, Algebraic cycles and fibrations, Documenta Math. 18 (2013), 1521---1553,

\bibitem{V2} C. Vial, Projectors on the intermediate algebraic Jacobians, New York J. Math. 19 (2013), 793---822,

\bibitem{V3} C. Vial, Remarks on motives of abelian type, arxiv:1112.1080v2,

\bibitem{V4} C. Vial, Niveau and coniveau filtrations on cohomology groups and Chow groups, Proceedings of the LMS 106(2) (2013), 410---444,

\bibitem{Vial} C. Vial, Chow-K\"unneth decomposition for 3- and 4-folds fibred by varieties with trivial Chow group of zero-cycles, J. Algebraic Geom. 24 (2015), 51---80,

\bibitem{Voe} V. Voevodsky, A nilpotence theorem for cycles algebraically equivalent to zero, Internat. Math. Research Notices 4 (1995), 187---198,

\bibitem{V9} C. Voisin, Remarks on zero--cycles on self--products of varieties, in: Moduli of vector bundles (Proceedings of the Taniguchi Congress), Maruyama Ed., Decker 1994,

%\bibitem{V0} C. Voisin, The generalized Hodge and Bloch conjectures are equivalent for general complete intersections, Annales scientifiques de l'ENS 46, fascicule 3 (2013), 449---475,

%\bibitem{V1} C. Voisin, The generalized Hodge and Bloch conjectures are equivalent for general complete intersections, II, \`a para\^itre au Journal of Mathematical Sciences, the University of Tokyo, Kodaira Centennial issue,

\bibitem{Vo} C. Voisin, Chow Rings, Decomposition of the Diagonal, and the Topology of Families, Princeton University Press, Princeton and Oxford, 2014,

\bibitem{Y} Q. Yin, Finite--dimensionality and cycles on powers of K3 surfaces, Comment. Math. Helv. 90 (2015), 503---511.

%\bibitem{Z} Q. Zhang, Rational connectedness of log $\QQ$--Fano varieties, J. Reine Angew. Math. 590 (2006), 131---142. 



\end{thebibliography}

% Non-BibTeX users please use

\end{document}